\documentclass[twocolumn,a4paper]{article}

\usepackage[a4paper,margin=2.1cm]{geometry}
\usepackage[numbers]{natbib}
\usepackage{amsmath,amsthm,amssymb,mathtools}
\usepackage{mathrsfs}
\usepackage{enumitem}
\usepackage{graphicx}
\usepackage{hyperref}
\hypersetup{
  pdftitle={The exact region between Chatterjee's xi and Blomqvist's beta},
  pdfauthor={Jacob Israel Orenday Lares, Marcus Rockel}
}

\newtheorem{theorem}{Theorem}
\newtheorem{proposition}[theorem]{Proposition}

\newtheorem{corollary}[theorem]{Corollary}
\theoremstyle{remark}
\newtheorem{remark}[theorem]{Remark}

\newcommand{\CC}{\mathcal{C}}
\newcommand{\RR}{\mathcal{R}}
\newcommand{\R}{\mathbb{R}}
\newcommand{\1}{\mathbf{1}}
\newcommand{\sgn}{\operatorname{sgn}}
\newcommand{\de}{\,\mathrm{d}}
\newcommand{\PQD}{\mathrm{PQD}}
\newcommand{\PiC}{\Pi}
\newcommand{\NQD}{\mathrm{NQD}}
\newcommand{\SI}{\mathrm{SI}}
\newcommand{\SD}{\mathrm{SD}}
\newcommand{\TPtwo}{\mathrm{TP}_2}
\newcommand{\RRtwo}{\mathrm{RR}_2}
\newcommand{\RS}{\mathrm{RS}}

\begin{document}

\twocolumn[{%
  \begin{center}
    {\LARGE\bfseries The exact region between Chatterjee's \(\xi\)\\[2pt] and Blomqvist's \(\beta\)\par}
    \vspace{1.4ex}
    {\large Jacob Israel Orenday Lares\footnotemark[1]\qquad Marcus Rockel\footnotemark[2]\par}
    \vspace{1.1ex}
    {\normalsize\today\par}
    \vspace{2.2ex}
  \end{center}
  \begin{center}
  \begin{minipage}{0.86\textwidth}
    \begin{center}\textbf{Abstract}\end{center}
    \small
    We determine the exact attainable region of the pair \((\xi(C),\beta(C))\)
    formed by Chatterjee's rank correlation \(\xi\) and Blomqvist's \(\beta\) over the class of all
    bivariate copulas and show that it is given by
    \(
    \{(x,y)\in[0,1]\times[-1,1]: |y|^3\le 2x\}.
    \)
    The left boundary \(\xi=|\beta|^3/2\) is attained by an explicit two-strip family $(L_b)_{b\in[-1,1]}$ obtained by perturbing independence with a signed tent function $g_b$ centered at the median.
    We derive several properties of this copula family including the formulas for its density and rank correlation measures, as well as positive and negative dependence properties.
    The right boundary \(\xi=1\) is attained for every admissible value of $\beta$ by deterministic measure-preserving copulas, and the full region is obtained by taking convex mixtures of the left- and right-boundary copulas with fixed $\beta$ and using the continuity of $\xi$ along these mixtures.
    We also record the exact regions in several natural subclasses of copulas.
    \medskip

    \noindent\textbf{Keywords:} Copula; Conditional distribution function; Measure-preserving transformation; Positive quadrant dependence; Rank correlation; tent functions.\\[2pt]
    \noindent\textbf{MSC 2020:} 62H20; 62H05; 60E15.
  \end{minipage}
  \end{center}
  \vspace{2.4ex}
}]
\footnotetext[1]{Corresponding author. \texttt{jacob.orenday@gmail.com}}
\footnotetext[2]{Department of Quantitative Finance, Institute for Economics, University of Freiburg, Rempartstr.~16, 79098 Freiburg, Germany.} 
\setcounter{footnote}{2}

\section{Introduction}\label{sec:intro}

Quantifying the strength and type of dependence between random variables is a central problem in statistics, see, for instance, the classical treatments \cite{renyi1959measures,schweizerwolff1981,joe1997multivariate}.
In the bivariate setting, scale-free dependence is naturally described by copulas, see, e.g., \cite{sklar1959fonctions,nelsen2006introduction,durante2016principles}.
We denote by $\CC$ the class of all bivariate copulas.
If \((X,Y)\) has continuous marginal distribution functions $F_X$ and $F_Y$, then Sklar's theorem \cite{sklar1959fonctions} yields a unique copula $C\in\CC$ with \(F_{X,Y}(x,y)=C(F_X(x),F_Y(y))\), and the classical rank-based measures of concordance, such as Kendall's $\tau$, Spearman's $\rho$, and Blomqvist's $\beta$, depend only on $C$, see, e.g., \cite{scarsini1984measures,nelsen2006introduction}.
They quantify the tendency of two variables to be ordered in the same direction, and hence capture positive or negative association in terms of joint ranks.

Chatterjee's rank correlation $\xi$, or just Chatterjee's $\xi$, was popularized in \cite{chatterjee2020} and differs in nature from these symmetric concordance measures: it takes values in \([0,1]\) and, in the copula representation used below, measures the strength of functional dependence of the second coordinate on the first.
For $C\in\CC$ it is given by
\begin{equation}\label{eq:xi-def}
\xi(C)=6\int_0^1\!\!\int_0^1\bigl(\partial_1 C(u,v)\bigr)^2\de u\de v-2,
\end{equation}
where the partial derivative is understood in the almost-everywhere sense.
This representation was studied as a copula-based dependence measure in \cite{dette2013copula}, before Chatterjee's general rank statistic.
The contribution of \cite{chatterjee2020} was a general formulation together with a simple rank-based estimator.
Blomqvist's beta, one of the simplest classical concordance measures, is
\begin{equation}\label{eq:beta-def}
\beta(C)=4C\bigl(\tfrac12,\tfrac12\bigr)-1,
\end{equation}
four times the excess mass, relative to independence, of the lower-left quadrant cut out by the two marginal medians, see \cite{blomqvist1950measure,nelsen2006introduction,scarsini1984measures}.

Exact attainable regions for pairs of dependence measures have been studied for
several classical rank-based coefficients; see, among others,
\cite{schreyer2017exact,bukovvsek2022exact}. Regions involving Blomqvist's
\(\beta\) together with Spearman's \(\rho\) or Kendall's \(\tau\) were obtained
in \cite{bukovvsek2021exact,bukovvsek2023exact}.
More recently, exact regions for triples of measures have also been considered, see \cite{bukovvsek2025exact} for a three-dimensional region between Blomqvist's $\beta$, Spearman's footrule, and Gini's $\gamma$.
The most closely related result involving Chatterjee's \(\xi\) is the exact
\(\xi\)--\(\rho\) region with Spearman's \(\rho\), recently determined in
\cite{ansari2026exact}. For a subclass \(\mathcal A\subseteq\CC\), write
\[
\RR^{\mathcal A}_{\xi,\beta}
\coloneqq
\{(\xi(C),\beta(C)):C\in\mathcal A\}
\subseteq\R^2 ,
\]
and abbreviate
\(
\RR_{\xi,\beta}\coloneqq \RR^{\CC}_{\xi,\beta}.
\)
When a dependence property appears in the superscript, it denotes the
corresponding subclass, e.g.~
\(
\RR^{\PQD}_{\xi,\beta}
=
\RR^{\CC_{\PQD}}_{\xi,\beta}
\)
or
\(
\RR^{\PQD,\RS}_{\xi,\beta}
=
\RR^{\CC_{\PQD}\cap\CC_{\RS}}_{\xi,\beta}.
\)
Our main result, illustrated in Figure~\ref{fig:region}, is the following.

\begin{theorem}[Exact $\xi$--$\beta$ region]\label{thm:main}
For the class $\CC$ of all bivariate copulas,
\[
\RR_{\xi,\beta}
=
\bigl\{(x,y)\in[0,1]\times[-1,1]: |y|^3\le 2x\bigr\} .
\]
The left boundary \(\xi=|\beta|^3/2\) is attained uniquely by the two-strip tent copulas $(L_b)_{b\in[-1,1]}$ of Section~\ref{sec:boundary-family}, and the right boundary \(\xi=1\) by deterministic measure-preserving copulas.
\end{theorem}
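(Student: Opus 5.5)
The proof has three parts: the inequality $\xi(C)\ge|\beta(C)|^3/2$ for every $C\in\CC$, attainment of both boundaries, and filling the interior.

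\emph{Lower bound.} Write $h_v(u)=\partial_1C(u,v)\in[0,1]$. For a.e.\ $v$ this function is nondecreasing in $v$, and $\int_0^1 h_v(u)\de u=v$. Since $\int_0^1\!\int_0^1 v^2\de u\de v=1/3$, we have
\[
\xi(C)=6\int_0^1\!\!\int_0^1 \bigl(h_v(u)-v\bigr)^2\de u\de v .
\]
Set $\delta=C(\tfrac12,\tfrac12)-\tfrac14=\beta/4$, and assume $\beta\ge0$ by symmetry. The map $v\mapsto C(\tfrac12,v)$ is $1$-Lipschitz in $v$, and so is $v\mapsto C(\tfrac12,v)-v/2$ in the appropriate sense. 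Hence
\[
d(v):=\int_0^{1/2}\bigl(h_v(u)-v\bigr)\de u=C(\tfrac12,v)-\tfrac v2
\]
satisfies $d(\tfrac12)=\delta$. Moreover $C(\tfrac12,v)\ge C(\tfrac12,\tfrac12)-(\tfrac12-v)$ for $v\le\tfrac12$, and symmetrically above $\tfrac12$. This gives $d(v)\ge\delta-|v-\tfrac12|/2$, a tent centred at the median. Also $\int_{1/2}^1(h_v-v)\de u=-d(v)$.

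By Cauchy--Schwarz on each half-strip,
\[
\int_0^1(h_v-v)^2\de u\ \ge\ 2d(v)^2+2d(v)^2=4d(v)^2 .
\]
Therefore
\[
\xi\ \ge\ 24\int \bigl(\delta-|v-\tfrac12|/2\bigr)_+^2\de v
=48\int_0^{2\delta}\bigl(\delta-t/2\bigr)^2\de t
=48\cdot\tfrac{2\delta^3}{3}
=32\delta^3
=\beta^3/2 .
\]
The constant checks out, so the lower bound is exactly $|\beta|^3/2$.

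\emph{Left boundary and uniqueness.} Equality forces two things. First, $h_v$ must be constant on each of $[0,\tfrac12]$ and $(\tfrac12,1]$, namely $v\pm2d(v)$. Second, $d(v)$ must equal the tent $g_b(v)=(\delta-|v-\tfrac12|/2)_+$. This is precisely the two-strip tent copula $L_b$ of Section~\ref{sec:boundary-family}, which yields both attainment and uniqueness. The remaining check is that $v\mapsto v\pm2g_b(v)$ is nondecreasing and $[0,1]$-valued, so that $L_b$ is a genuine copula. This holds since the slope of $2g_b$ is $\pm1$ and $|b|\le1$.

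\emph{Right boundary and interior.} For each $b\in[-1,1]$ I would construct a measure-preserving $f$ with $C_f(u,v)=\lambda(\{x\le u: f(x)\le v\})$ and $C_f(\tfrac12,\tfrac12)=(1+b)/4$. For instance, take $f$ to map $[0,\tfrac12]$ onto $[0,a]\cup[\tfrac12,1-a']$ piecewise linearly with the right measures. Then $\xi(C_f)=1$. For the mixture $M_t=(1-t)L_b+tC_f$, $\beta$ is affine in $t$ and equal to $b$ at both ends. Meanwhile $\xi(M_t)$ is a quadratic, hence continuous, function of $t$, and runs from $|b|^3/2$ to $1$. The intermediate value theorem then fills the horizontal segment. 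Finally, $\xi\le1$ and $|\beta|\le1$ always hold.

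\emph{Main obstacle.} I expect the main difficulty to be making the tent bound on $d(v)$ rigorous, in particular the monotonicity/Lipschitz step on the half-strips. The uniqueness argument's a.e.\ issues also need care.
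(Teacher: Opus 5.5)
Your proof is correct and follows essentially the same route as the paper. Your $d(v)=C(\tfrac12,v)-\tfrac v2$ is half of the paper's $g=F_0-\mathrm{id}$, your Cauchy--Schwarz step on the two half-strips is its Jensen step, and the Lipschitz tent bound, the equality analysis, the interval-exchange right boundary (with $a=a'=(1+b)/4$ it is the paper's $f_s$) and the mixture/intermediate-value interpolation all match; the only blemish is notational, since your tent $(\delta-|v-\tfrac12|/2)_+$ is half of the paper's $g_b$, though your kernels $v\pm2d(v)$ correctly recover $F_b^{\pm}$.
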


\begin{figure}[htbp!]
\centering
\includegraphics[width=0.92\linewidth]{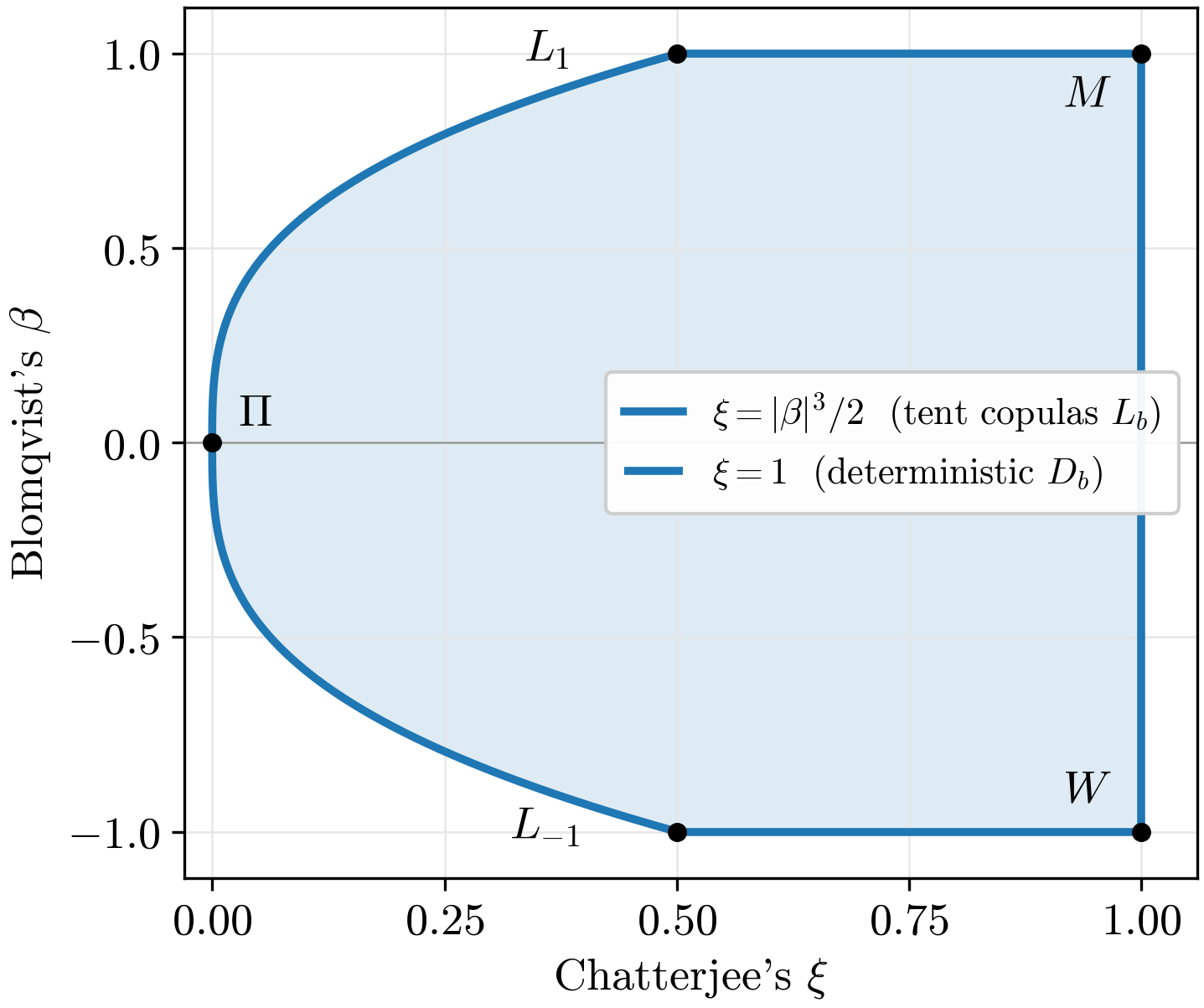}
\caption{The exact region $\RR_{\xi,\beta}$ of Chatterjee's $\xi$ and Blomqvist's $\beta$ over all bivariate copulas (Theorem~\ref{thm:main}).
The curved left boundary is the cubic \(\xi=|\beta|^3/2\), attained uniquely by the tent copulas $L_b$.
Further, the straight right boundary \(\xi=1\) is attained by the deterministic measure-preserving copulas $D_b$.
Marked are the independence copula \(\Pi(u,v)=uv\), the Fr\'echet--Hoeffding bounds \(M(u,v)=\min\{u,v\}\) and \(W(u,v)=\max\{u+v-1,0\}\) on the right boundary, and the extreme tent copulas $L_{\pm1}$ at \((\tfrac12,\pm1)\).}
\label{fig:region}
\end{figure}

The proof has a simple variational structure.
For a given copula, we condition on whether the first coordinate lies in the left or right half of the unit interval.
This produces two conditional distribution functions $F_0$ and $F_1$ for the second coordinate whose average is the uniform distribution, and the value of \(\beta(C)\) prescribes the displacement of $F_0$ from the identity at the median.
Monotonicity of $F_0$ and $F_1$ turns this displacement into a one-dimensional unit-Lipschitz problem whose unique $L^2$ minimizer is a tent function, which explains the cubic boundary \(\xi=|\beta|^3/2\).

The rest of the paper is organized as follows.
Section~\ref{sec:preliminaries} recalls the Markov-kernel representation of \(\partial_1C\) and the elementary bounds $0\le\xi\le1$.
Section~\ref{sec:boundary-family} introduces the boundary copulas $L_b$ and records their structural properties, see Proposition~\ref{prop:Lb-properties}.
Section~\ref{sec:sharp-inequality} proves the sharp inequality together with its equality case, see Proposition~\ref{prop:sharp-inequality}.
Section~\ref{sec:exact-region} constructs the right-boundary copulas and assembles the proof of Theorem~\ref{thm:main}.
Section~\ref{sec:subclasses} records exact regions for several natural
subclasses and partial information for stochastic monotonicity subclasses, and
Section~\ref{sec:conclusion} concludes.

\section{Preliminaries}\label{sec:preliminaries}

We collect the standard facts used throughout.
A bivariate copula is a function \(C\colon[0,1]^2\to[0,1]\) that is grounded, \(2\)-increasing, and has uniform marginals.
More explicitly, \(C(u_1,u_2)=0\) whenever \(u_1=0\) or \(u_2=0\),
\[
    C(v_1,v_2)-C(u_1,v_2)-C(v_1,u_2)+C(u_1,u_2)\geq 0
\]
for all \(u_1\leq v_1\) and \(u_2\leq v_2\), and \(C(u,1)=u\) and \(C(1,v)=v\) for all \(u,v\in[0,1]\).
Classical copulas include $\Pi(u,v)\coloneqq uv$, $M(u,v)\coloneqq \min\{u,v\}$, and $W(u,v)\coloneqq \max\{u+v-1,0\}$ for \((u,v)\in[0,1]^2\), which are the independence, comonotonicity, and countermonotonicity copulas, respectively.

Let \(C\in\CC\), and let \(K_C\) be a regular conditional distribution of the second coordinate given the first under \(C\).
One may choose a version such that \(K_C(u,[0,v])=\partial_1C(u,v)\) for Lebesgue-a.e.\ \(u\) and every continuity point \(v\) of the conditional distribution, see, e.g., \cite{nelsen2006introduction,darsow1992copulas,durante2016principles}.
Writing \(h_v(u)\coloneqq K_C(u,[0,v])\), we have \(0\le h_v(u)\le1\), the map \(v\mapsto h_v(u)\) is a distribution function for a.e.\ \(u\), and
\begin{equation}\label{eq:C-from-h}
C(u,v)=\int_0^u h_v(t)\de t,
\qquad
\int_0^1 h_v(t)\de t=v.
\end{equation}
Conversely, any family \((h_v)_{v\in[0,1]}\) with these properties and with \(v\mapsto h_v(u)\) nondecreasing for a.e.\ \(u\) defines a copula through \eqref{eq:C-from-h}. This is the usual Markov-kernel characterization of copulas, cf.~\cite{darsow1992copulas,durante2016principles,fuchs2023total}.

We use the following standard terminology, see \cite{lehmann1966concepts,nelsen2006introduction,shakedshanthikumar2007stochastic} for quadrant dependence and stochastic monotonicity, and \cite{karlin1968total,karlinrinott1980classes,karlinrinott1980reverse} for total positivity, multivariate total positivity, and reverse-rule dependence.
A copula $C$ is \emph{positively quadrant dependent} ($\PQD$) if \(C(u,v)\ge uv\) for all $(u,v)\in[0,1]^2$, and \emph{negatively quadrant dependent} ($\NQD$) if \(C(u,v)\le uv\) throughout.
For \((U,V)\) with copula \(C\), we say that the second coordinate is
\emph{stochastically increasing} in the first (\(\SI\)) if the Markov kernel
admits a version such that, for every \(v\in[0,1]\), the map
\[
u\mapsto K_C(u,[0,v])
\]
is nonincreasing. We say that it is \emph{stochastically decreasing}
in the first (\(\SD\)) if the kernel admits a version for which the same map
is nondecreasing for every \(v\in[0,1]\).
For an absolutely continuous copula with density $c$, \emph{total positivity of order two} ($\TPtwo$) means
\(
c(u_1,v_1)c(u_2,v_2)\ge c(u_1,v_2)c(u_2,v_1)
\)
whenever $u_1<u_2$ and $v_1<v_2$, up to null sets.
Lastly, the \emph{reverse-regularity property} $\RRtwo$ reverses this inequality.

\section{The boundary copula family}\label{sec:boundary-family}

Fix $b\in[-1,1]$ and put \(r\coloneqq|b|\) and \(\varepsilon\coloneqq\sgn(b)\), with the convention \(\sgn(0)=0\), together with the endpoints \(\alpha_b\coloneqq(1-r)/2\) and \(\gamma_b\coloneqq(1+r)/2\).
Define the signed tent function
\begin{equation}\label{eq:tent}
g_b(v)
\coloneqq
\varepsilon\Bigl(\tfrac r2-\bigl|v-\tfrac12\bigr|\Bigr)_+ ,
\qquad 0\le v\le1 ,
\end{equation}
which vanishes outside \([\alpha_b,\gamma_b]\), satisfies \(g_b(0)=g_b(1)=0\) and \(g_b(\tfrac12)=b/2\), and is \(1\)-Lipschitz.
With \(\ell(u)\coloneqq\min\{u,1-u\}\), set
\begin{equation}\label{eq:Lb-compact}
L_b(u,v)
\coloneqq
uv+(\ell\otimes g_b)(u,v)
\end{equation}
for \((u,v)\in[0,1]^2\), where $(\ell\otimes g_b)(u,v):=\ell(u)g_b(v)$.
Equivalently, on the two vertical strips,
\begin{equation}\label{eq:Lb-two-strip}
L_b(u,v)=
\begin{cases}
 u\bigl(v+g_b(v)\bigr), & 0\le u\le\tfrac12,\\[1mm]
 uv+(1-u)g_b(v), & \tfrac12<u\le1 .
\end{cases}
\end{equation}
The family $(L_b)_{b\in[-1,1]}$ will be shown to trace the left boundary of the
exact region; the notation \(L_b\) is mnemonic for the left-boundary copula at
fixed \(\beta=b\). Its density is displayed in
Figure~\ref{fig:boundary-copulas}.
We first collect its structural properties.

\begin{figure}[htbp!]
\centering
\includegraphics[width=\linewidth]{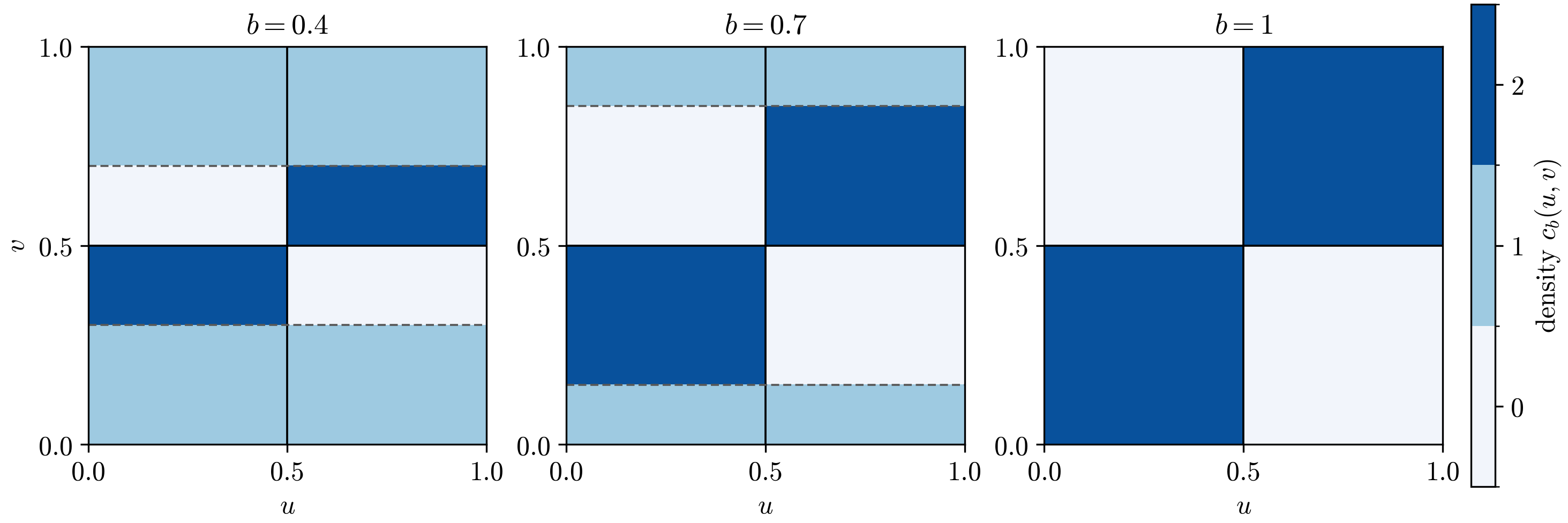}
\caption{The density $c_b$ of the boundary copula $L_b$, see \eqref{eq:Lb-density}, for \(b=0.4\), \(0.7\), and \(1\).
It is piecewise constant on rectangular blocks and takes only the values \(0\), \(1\) (light), and \(2\) (dark).
Solid lines mark the marginal medians \(u=\tfrac12\) and \(v=\tfrac12\), dashed lines the edges \(v=\alpha_b,\gamma_b\) of the tent support.
As \(b\uparrow1\), the density-\(2\) blocks grow until, at \(b=1\), the copula
is supported, with density \(2\), on the two diagonal median quadrants.
The cases $b<0$ are obtained by reflection in $v$.
This two-strip structure forces the factor \(\ell(u)=\min\{u,1-u\}\), and hence the cubic boundary \(\xi=|\beta|^3/2\).}
\label{fig:boundary-copulas}
\end{figure}

Differentiating \eqref{eq:tent} gives
\[
g_b'(v)=\varepsilon\1_{(\alpha_b,1/2)}(v)-\varepsilon\1_{(1/2,\gamma_b)}(v)
\]
for a.e.\ $v$.
Further, write \(\sigma(u)\coloneqq1\) for $0<u<\tfrac12$ and \(\sigma(u)\coloneqq-1\) for $\tfrac12<u<1$.

\begin{proposition}[Density and conditional distributions of $L_b$]\label{prop:Lb-copula-kernel}
For every $b\in[-1,1]$ the function $L_b$ is an absolutely continuous copula, with density
\begin{equation}\label{eq:Lb-density}
c_b(u,v)=1+\sigma(u)\,g_b'(v)
\end{equation}
for a.e.\ \((u,v)\), so that $c_b$ takes only the values \(0\), \(1\), and \(2\).
Its Markov kernel is
\begin{equation}\label{eq:Lb-kernel}
\partial_1L_b(u,v)
=
\begin{cases}
F_b^+(v), & 0<u<\tfrac12,\\
F_b^-(v), & \tfrac12<u<1,
\end{cases}
\end{equation}
where \(F_b^{\pm}(v)\coloneqq v\pm g_b(v)\) are distribution functions with \(\tfrac12F_b^+(v)+\tfrac12F_b^-(v)=v\).
\end{proposition}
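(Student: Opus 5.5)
The plan is to verify the Markov-kernel characterization recalled around \eqref{eq:C-from-h}. In place of the family \((h_v)\) we take \(h_v(u)=F_b^+(v)\) for \(u<\tfrac12\) and \(h_v(u)=F_b^-(v)\) for \(u>\tfrac12\). This identifies \(L_b\) as a copula and gives \eqref{eq:Lb-kernel} in one stroke.

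The first step is to check that \(F_b^\pm(v)=v\pm g_b(v)\) are distribution functions on \([0,1]\). Since \(g_b(0)=g_b(1)=0\), we get \(F_b^\pm(0)=0\) and \(F_b^\pm(1)=1\). Since \(g_b\) is \(1\)-Lipschitz, \(|g_b'|\le 1\) a.e., so \((F_b^\pm)'=1\pm g_b'\ge 0\) a.e. Both functions are absolutely continuous, hence nondecreasing. Because they are nondecreasing and agree with \(0\) and \(1\) at the endpoints, they also take values in \([0,1]\). The averaging identity \(\tfrac12F_b^++\tfrac12F_b^-=v\) is immediate.

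The second step integrates in \(u\). For \(u\le\tfrac12\) we get \(\int_0^u h_v\,\de t=u(v+g_b(v))\). For \(u>\tfrac12\) we get
\[
\tfrac12(v+g_b(v))+(u-\tfrac12)(v-g_b(v))=uv+(1-u)g_b(v).
\]
This reproduces \eqref{eq:Lb-two-strip}, hence \(L_b(u,v)=\int_0^u h_v(t)\de t\). Taking \(u=1\) gives \(\int_0^1 h_v\,\de t=v\) by the averaging identity. By the converse part of the characterization, \(L_b\) is therefore a copula. Alternatively, one can check the axioms directly: groundedness follows from \(\ell(0)=g_b(0)=0\), the margins from \(\ell(1)=g_b(1)=0\), and 2-increasingness from the nonnegativity of the density found below. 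Since \(L_b\) is Lipschitz in \(u\) and piecewise linear, \(\partial_1L_b(u,v)=h_v(u)\) for every \(u\ne\tfrac12\), which is \eqref{eq:Lb-kernel}.

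The third step establishes absolute continuity and the density. Here \(L_b(u,v)=uv+\ell(u)g_b(v)\) with \(\ell\) and \(g_b\) both Lipschitz, and \(\ell'=\sigma\) a.e. Hence \(L_b(u,v)=\int_0^u\int_0^v\bigl(1+\sigma(s)g_b'(t)\bigr)\de t\de s\), because the product of two absolutely continuous functions vanishing at \(0\) is the double integral of the product of their derivatives. The density is then \(c_b=1+\sigma g_b'\). Since \(\sigma\in\{\pm1\}\) and \(g_b'\in\{0,\pm\varepsilon\}\) a.e., we get \(c_b\in\{0,1,2\}\).

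The only mildly delicate point is the degenerate case \(b=0\), where \(g_0\equiv0\) and \(L_0=\Pi\). The formulas cover this case without change.
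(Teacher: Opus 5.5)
Your proof is correct and essentially the same as the paper's. The paper checks the boundary values directly and gets 2-increasingness by integrating the nonnegative mixed derivative $c_b$ over rectangles, splitting at $u=\tfrac12$. You instead get it from the monotonicity of $F_b^\pm$ via the converse Markov-kernel characterization, which is the same fact $|g_b'|\le1$ read through $h_{v_2}(u)-h_{v_1}(u)=\int_{v_1}^{v_2}c_b(u,t)\de t$. Your global representation $L_b(u,v)=\int_0^u\int_0^v c_b$, using $\ell(u)=\int_0^u\sigma$ and $g_b(v)=\int_0^v g_b'$, also gives absolute continuity without splitting at $u=\tfrac12$.
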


\begin{proof}
Since \(\ell(0)=\ell(1)=0\) and \(g_b(0)=g_b(1)=0\), the function $L_b$ has the boundary values \(L_b(0,v)=L_b(u,0)=0\), \(L_b(1,v)=v\), \(L_b(u,1)=u\).
As $g_b$ and $\ell$ are absolutely continuous, $L_b$ is absolutely continuous on each of the two rectangles separated by \(u=\tfrac12\), with mixed derivative \eqref{eq:Lb-density}.
Here, \(\sigma(u)g_b'(v)\in\{-1,0,1\}\) a.e., so \(c_b\ge0\).
Consequently, for every rectangle
\([u_1,u_2]\times[v_1,v_2]\subseteq[0,1]^2\), splitting at \(u=\tfrac12\) if necessary,
\begin{align*}
\Delta L_b
=&
L_b(u_2,v_2)-L_b(u_1,v_2)-L_b(u_2,v_1)+L_b(u_1,v_1) \\
=&
\int_{u_1}^{u_2}\int_{v_1}^{v_2} c_b(u,v)\de v\de u
\ge0 .
\end{align*}
Thus \(L_b\) is \(2\)-increasing.
Together with the boundary values above, this proves that \(L_b\) is a copula.
Formula \eqref{eq:Lb-kernel} follows by differentiating \eqref{eq:Lb-two-strip} in $u$.
Finally the slopes of $g_b$ lie in \([-1,1]\), so $F_b^{\pm}$ are nondecreasing, continuous, and equal to \(0\) and \(1\) at the endpoints.
Their average is the identity by construction, thereby completing the proof.
\end{proof}

We write
\(
A_b\coloneqq\int_0^1 g_b(v)\de v=\tfrac14 b|b|
\)
for the signed area of the tent, which recurs below.

\begin{proposition}[Values and structural properties of $L_b$]\label{prop:Lb-properties}
For every $b\in[-1,1]$ the copula $L_b$ has the following properties.
\begin{enumerate}[label=\textup{(\roman*)}]
\item \emph{Boundary values:} \(\beta(L_b)=b\) and \(\xi(L_b)=|b|^3/2\).
\item \emph{Quadrant masses:} the two diagonal median quadrants each carry mass \((1+b)/4\), and the two anti-diagonal median quadrants each carry mass \((1-b)/4\).
\item \emph{Reflection:} \(L_{-b}(u,v)=u-L_b(u,1-v)=v-L_b(1-u,v)\), so the negative branch is the positive branch reflected in either coordinate.
\item \emph{Radial symmetry:} \(L_b(u,v)=u+v-1+L_b(1-u,1-v)\) for all \(u,v\).
In other words, $L_b$ equals its survival copula.
\item \emph{Stochastic monotonicity and quadrant dependence:} for $b>0$ the second coordinate is $\SI$ in the first and hence $L_b$ is $\PQD$, for $b<0$ it is $\SD$ and hence $\NQD$, and \(b=0\) is independence.
Consequently, $L_b$ is $\PQD$ if and only if $b\ge0$ and $\NQD$ if and only if $b\le0$.
\item \emph{Reverse stochastic monotonicity:} for $0<|b|<1$ the first coordinate is neither $\SI$ nor $\SD$ in the second, for \(b=1\) it is $\SI$, for \(b=-1\) it is $\SD$, and for \(b=0\) the coordinates are independent.
\item \emph{Total positivity:} the density $c_b$ is $\TPtwo$ if and only if $b\in\{0,1\}$, and $\RRtwo$ if and only if $b\in\{-1,0\}$.
\item \emph{Exchangeability:} $L_b$ is symmetric, i.e.\ \(L_b(u,v)=L_b(v,u)\), if and only if $b\in\{-1,0,1\}$.
Here, \(b=0\) is independence, \(b=1\) is the ordinal sum of two independence copulas on the diagonal median quadrants, and \(b=-1\) is the reflection of this ordinal sum in the second coordinate.
\item \emph{Classical concordance measures:} \(\rho(L_b)=\tfrac34 b|b|\) and \(\tau(L_b)=\tfrac12 b|b|\).
\end{enumerate}
\end{proposition}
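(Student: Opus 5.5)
The plan is to verify items (i)--(ix) one by one, working directly from the compact form \eqref{eq:Lb-compact}, the two-strip form \eqref{eq:Lb-two-strip}, and the kernel and density descriptions of Proposition~\ref{prop:Lb-copula-kernel}. Almost every item reduces to a short computation with $\ell$ and $g_b$. Only (vi) and (vii) need some case analysis on the block structure of $c_b$.

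For (i), I evaluate at the median: $L_b(\tfrac12,\tfrac12)=\tfrac14+\tfrac12\cdot\tfrac b2$, so $\beta=4L_b(\tfrac12,\tfrac12)-1=b$. For $\xi$, I use \eqref{eq:Lb-kernel}. Since $(\partial_1L_b)^2=(v\pm g_b)^2$ on the two strips, averaging the two strips gives
\[
\int_0^1\!\!\int_0^1(\partial_1L_b)^2\de u\de v=\int_0^1\bigl(v^2+g_b^2\bigr)\de v=\tfrac13+\int_0^1 g_b^2 .
\]
Hence $\xi=6\int g_b^2$. The tent has two linear legs of length $r/2$ and height $r/2$, so $\int g_b^2=2\cdot\frac13(\frac r2)^2\frac r2=r^3/12$, which gives $\xi=r^3/2$.

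For (ii), the lower-left quadrant mass is $L_b(\tfrac12,\tfrac12)=(1+b)/4$, and the uniform margins then fix the other three quadrant masses. For (iii) and (iv), I use $g_{-b}=-g_b$, $g_b(1-v)=g_b(v)$ and $\ell(1-u)=\ell(u)$, and expand, for example,
\[
u-L_b(u,1-v)=u-u(1-v)-\ell(u)g_b(v)=uv+\ell(u)g_{-b}(v).
\]
The other identities follow the same way. For (ix), I write $L_b=\Pi+\ell\otimes g_b$. Then
\[
\rho=12\int\!\!\int(L_b-uv)=12\int\ell\int g_b=12\cdot\tfrac14\cdot A_b=\tfrac34 b|b| .
\]
For $\tau$, I use $\tau=1-4\int\!\!\int\partial_1L_b\,\partial_2L_b$. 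Since $\partial_1L_b=v+\sigma g_b$ and $\partial_2L_b=u+\ell g_b'$, the expansion reduces to the integrals $\int\ell$, $\int u\sigma$, $\int\sigma\ell$ (which is $0$), $\int vg_b'$ (which equals $-A_b$ after integration by parts), and $\int g_bg_b'$ (which is $0$). Collecting these should give $\tfrac12b|b|$.

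For (v), the kernel takes the value $F_b^+=v+g_b$ on the left strip and $F_b^-$ on the right strip. If $b>0$ then $g_b\ge0$, so the kernel is nonincreasing in $u$, which is $\SI$; $\SI$ implies $\PQD$ by the standard result. The case $b<0$ is symmetric and gives $\SD$ and hence $\NQD$. For the converse I use the median value: $b<0$ gives $L_b(\tfrac12,\tfrac12)<\tfrac14$, so $L_b$ is not $\PQD$, and similarly with the roles of $\PQD$ and $\NQD$ swapped.

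For (viii), I note that exchangeability at $0<|b|<1$ fails because $c_b$ is not symmetric: the density-$2$ and density-$0$ blocks lie on the horizontal bands $(\alpha_b,\gamma_b)$ but span full vertical strips. A concrete witness is $L_b(u,v)\ne L_b(v,u)$ at $u=\tfrac14$, $v$ slightly above $\alpha_b$. The cases $b\in\{-1,0,1\}$ are checked by hand.

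For (vi), I compute $\partial_2L_b(u,v)=u+\ell(u)g_b'(v)$. For fixed $u$, in the band $v\in(\alpha_b,\tfrac12)$ this equals $u+\varepsilon\ell(u)$, and in the band $(\tfrac12,\gamma_b)$ it equals $u-\varepsilon\ell(u)$. When $0<|b|<1$, the bands $v<\alpha_b$ and $v>\gamma_b$ both have positive length, and there $\partial_2L_b=u$. The kernel therefore takes the values $u$, $u+\ell$ and $u-\ell$ at different $v$. With $b>0$, for instance, comparing $v<\alpha_b$ with $v\in(\alpha_b,\tfrac12)$ gives an ordering in one direction, and comparing $v\in(\alpha_b,\tfrac12)$ with $v\in(\tfrac12,\gamma_b)$ gives it in the other. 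So the kernel is neither monotone decreasing nor monotone increasing in $v$ for fixed $u$, and this holds on a set of positive measure, so no version of the kernel can repair it. At $b=\pm1$ the outer bands vanish, and the kernel becomes $2\min(u,\tfrac12)$ and so on, which is monotone.

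For (vii), I check $\TPtwo$ via the block structure of $c_b\in\{0,1,2\}$. For $0<b<1$, take $u_1<\tfrac12<u_2$ and $v_1<\alpha_b<v_2<\tfrac12$. Then $c(u_1,v_1)=c(u_2,v_1)=1$, $c(u_1,v_2)=2$ and $c(u_2,v_2)=0$, so
\[
c(u_1,v_1)\,c(u_2,v_2)=0<2=c(u_1,v_2)\,c(u_2,v_1),
\]
which violates $\TPtwo$. At $b=1$ the density is $2$ on the two diagonal quadrants and $0$ elsewhere, and this is $\TPtwo$. The $\RRtwo$ statements follow from these via the reflection in (iii). I expect (vi) and (vii), the careful case analysis over blocks and versions of the kernel, to be the main obstacle; everything else is routine integration.
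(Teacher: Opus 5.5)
Items (i)--(vi), (viii) and (ix) follow the paper's proof in substance. The differences are minor. For $\tau$ you use $1-4\int\!\!\int\partial_1L_b\,\partial_2L_b$ where the paper uses $4\int L_b\,c_b-1$; your expansion gives $\tfrac14-\tfrac12A_b$ and hence $\tau=2A_b$, as required. You also use a different (valid) asymmetry witness in (viii), and a general $u\in(0,1)$ rather than $u=\tfrac12$ in (vi).

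\textbf{Gap in (vii).} You prove two facts directly: $c_b$ fails $\TPtwo$ for $0<b<1$, and $c_b$ is $\TPtwo$ at $b=1$ (and trivially at $b=0$). You then say the $\RRtwo$ statements follow by the reflection in (iii). That reflection gives $c_{-b}(u,v)=c_b(u,1-v)$, and it exchanges $\TPtwo$ for $c_b$ with $\RRtwo$ for $c_{-b}$. Applied to what you proved, it yields only two things: $\RRtwo$ fails for $-1<b<0$, and $\RRtwo$ holds at $b=-1$. It says nothing about $\TPtwo$ for $b\in[-1,0)$, nor about $\RRtwo$ for $b\in(0,1]$. Both are needed for the ``only if'' halves of ``$\TPtwo$ iff $b\in\{0,1\}$'' and ``$\RRtwo$ iff $b\in\{-1,0\}$''. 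As written, you have not excluded, for example, that $c_{-1/2}$ is $\TPtwo$ or that $c_1$ is $\RRtwo$.

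\textbf{How to close it.} The two missing statements are reflections of each other, so one additional check suffices. For $0<b\le1$, take $u_1<\tfrac12<u_2$ and $\alpha_b<v_1<\tfrac12<v_2<\gamma_b$. Then
\[
c_b(u_1,v_1)\,c_b(u_2,v_2)=4>0=c_b(u_1,v_2)\,c_b(u_2,v_1),
\]
so $\RRtwo$ fails. Reflecting gives the failure of $\TPtwo$ for $-1\le b<0$. This is what the paper does with explicit witnesses for each sign.

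Alternatively, you can reuse (v). A $\TPtwo$ density implies $\SI$ and hence $\PQD$, and an $\RRtwo$ density implies $\SD$ and hence $\NQD$. Since $L_b$ is not $\PQD$ for $b<0$ and not $\NQD$ for $b>0$, both missing cases follow at once.
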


\begin{proof}
\emph{(i)} From \(\ell(\tfrac12)=\tfrac12\) and \(g_b(\tfrac12)=b/2\) we get
\(
L_b(\tfrac12,\tfrac12)=\tfrac14+\tfrac12\cdot\tfrac b2=(1+b)/4
\),
hence \(\beta(L_b)=b\).
By \eqref{eq:Lb-kernel} the two strips contribute equally, so
\begin{align*}
&\int_0^1\!\!\int_0^1\bigl(\partial_1L_b(u,v)\bigr)^2\de u\de v\\
&\quad=\tfrac12\!\int_0^1\!\bigl(F_b^+(v)\bigr)^2\de v+\tfrac12\!\int_0^1\!\bigl(F_b^-(v)\bigr)^2\de v\\
&\quad=\int_0^1\!\bigl(v^2+g_b(v)^2\bigr)\de v=\tfrac13+\int_0^1\! g_b(v)^2\de v .
\end{align*}
Since \(g_b\) has absolute height and half-width both \(|b|/2\), a direct calculation gives \(\int_0^1 g_b(v)^2\de v=|b|^3/12\), and \eqref{eq:xi-def} yields
\(
\xi(L_b)=6\bigl(\tfrac13+\tfrac{|b|^3}{12}\bigr)-2=|b|^3/2
\).

\emph{(ii)} Immediate from \(L_b(\tfrac12,\tfrac12)=(1+b)/4\) and the uniform marginal constraints.

\emph{(iii)} Using \(g_b(1-v)=g_b(v)\), \(g_{-b}=-g_b\), and \(\ell(1-u)=\ell(u)\), substitution into \eqref{eq:Lb-compact} gives
\[
u-L_b(u,1-v)=uv-\ell(u)g_b(v)=L_{-b}(u,v)
,\]
and likewise \(v-L_b(1-u,v)=L_{-b}(u,v)\).

\emph{(iv)} Similarly, by \(\ell(1-u)=\ell(u)\) and \(g_b(1-v)=g_b(v)\),
\[
u+v-1+L_b(1-u,1-v)=uv+\ell(u)g_b(v)=L_b(u,v)
.\]

\emph{(v)} By \eqref{eq:Lb-kernel}, for $u\ne\tfrac12$ the conditional distribution function of $V$ given \(U=u\) is $F_b^+$ on \((0,\tfrac12)\) and $F_b^-$ on \((\tfrac12,1)\).
For $b>0$ one has $g_b\ge0$, hence $F_b^+\ge F_b^-$ pointwise, so \(\mathbb P(V\le v\mid U=u)\) is nonincreasing in $u$.
Thus, $V$ is $\SI$ in $U$, and since stochastic increasingness implies positive quadrant dependence \cite{lehmann1966concepts,nelsen2006introduction}, $L_b$ is $\PQD$.
The case $b<0$ is symmetric, giving $\SD$ and hence $\NQD$, and \(b=0\) is independence.
The equivalences are sharp because \(L_b(u,v)-uv=\ell(u)g_b(v)\) has the sign of $b$.

\emph{(vi)} The reverse conditional law is governed by
\[
\partial_2L_b(u,v)=u+\ell(u)g_b'(v)
\]
where $g_b$ is differentiable.
Fix \(u=\tfrac12\), so
\[
\partial_2L_b(\tfrac12,v)=\tfrac12+\tfrac12 g_b'(v)
.\]
For $0<b<1$, as $v$ crosses
\(
(0,\alpha_b)
\),
\((\alpha_b,\tfrac12)\),
\((\tfrac12,\gamma_b)\),
and 
\((\gamma_b,1)
\)
this takes the values \(\tfrac12,1,0,\tfrac12\), which is neither nondecreasing nor nonincreasing.
The case $-1<b<0$ gives \(\tfrac12,0,1,\tfrac12\), again non-monotone.
Hence the first coordinate is neither $\SI$ nor $\SD$ in the second.
For \(b=1\), the reverse conditional distribution function is, for a.e.\ \(v\),
\[
\partial_2L_1(u,v)=
\begin{cases}
u+\ell(u), & v<\tfrac12,\\
u-\ell(u), & v>\tfrac12,
\end{cases}
\]
which is nonincreasing in \(v\) for every fixed \(u\).
Hence, the first coordinate is $\SI$ in the second. For \(b=-1\), the inequalities are reversed and the first coordinate is $\SD$ in the second. For \(b=0\), the coordinates are independent, and hence both stochastic monotonicity directions hold trivially.

\emph{(vii)} We use \(c_b=1+\sigma g_b'\).
For \(b=0\), \(c_0\equiv1\), so both $\TPtwo$ and $\RRtwo$ hold.

For \(b=1\), the density equals \(2\) on the two diagonal median quadrants and \(0\) on the two anti-diagonal median quadrants. A direct case check gives
\[
c_1(u_1,v_1)c_1(u_2,v_2)\ge c_1(u_1,v_2)c_1(u_2,v_1)
\]
for \(u_1<u_2\) and \(v_1<v_2\), so \(c_1\) is $\TPtwo$.
However, choosing \(u_1<\tfrac12<u_2\) and \(v_1<\tfrac12<v_2\) gives \(c_1(u_1,v_1)c_1(u_2,v_2)=4\) and \(c_1(u_1,v_2)c_1(u_2,v_1)=0\), so \(c_1\) is not $\RRtwo$.

For \(b=-1\), the density equals \(2\) on the two anti-diagonal median quadrants and \(0\) on the two diagonal median quadrants. Reflecting one coordinate from the case \(b=1\) gives $\RRtwo$. The same choice \(u_1<\tfrac12<u_2\), \(v_1<\tfrac12<v_2\) gives \(c_{-1}(u_1,v_1)c_{-1}(u_2,v_2)=0\) and \(c_{-1}(u_1,v_2)c_{-1}(u_2,v_1)=4\), so \(c_{-1}\) is not $\TPtwo$.

For \(0<b<1\), choosing \(u_1<\tfrac12<u_2\), \(v_1<\alpha_b\), and \(\alpha_b<v_2<\tfrac12\) gives \(c_b(u_1,v_1)c_b(u_2,v_2)=0\) but \(c_b(u_1,v_2)c_b(u_2,v_1)=2\), so $\TPtwo$ fails.
For \(-1<b<0\), choosing \(u_1<\tfrac12<u_2\) and
\(\alpha_b<v_1<\tfrac12<v_2<\gamma_b\) gives \(0\) versus \(4\), so $\TPtwo$ fails as well.
For $\RRtwo$ with \(0<b<1\), choosing \(u_1<\tfrac12<u_2\) and \(\alpha_b<v_1<\tfrac12<v_2<\gamma_b\) gives \(c_b(u_1,v_1)c_b(u_2,v_2)=4\) but \(c_b(u_1,v_2)c_b(u_2,v_1)=0\), so the reverse inequality fails.
For $\RRtwo$ with \(-1<b<0\), choosing \(u_1<\tfrac12<u_2\), \(v_1<\alpha_b\), and \(\alpha_b<v_2<\tfrac12\) gives \(c_b(u_1,v_1)c_b(u_2,v_2)=2\) but \(c_b(u_1,v_2)c_b(u_2,v_1)=0\), so the reverse inequality fails as well.

Hence $\TPtwo$ holds exactly for \(b\in\{0,1\}\), and $\RRtwo$ holds exactly for \(b\in\{-1,0\}\).

\emph{(viii)} For \(b=0\), \(L_0=\PiC\) is symmetric, and for \(b=\pm1\), \(g_{\pm1}=\pm\ell\) gives \(L_{\pm1}(u,v)=uv\pm\ell(u)\ell(v)\), which is symmetric.
Conversely, for $0<|b|<1$ take \(u\in(0,\alpha_b)\) and \(v=\tfrac12\): then \(g_b(u)=0\) while \(g_b(\tfrac12)=b/2\ne0\), so
\(
L_b(u,\tfrac12)=\tfrac u2+\ell(u)\tfrac b2\ne\tfrac u2=L_b(\tfrac12,u)
\),
and $L_b$ is not symmetric.

\emph{(ix)} We use the standard copula formulae for Spearman's rho and Kendall's tau \cite{scarsini1984measures,nelsen2006introduction}.
With \(\int_0^1\ell(u)\de u=\tfrac14\) and \eqref{eq:Lb-compact}, the Spearman rho
\(
\rho(C)=12\int_{[0,1]^2}C(u,v)\de u\de v-3
\)
 gives
\[
\rho(L_b)=12\bigl(\tfrac14+\tfrac14 A_b\bigr)-3=3A_b=\tfrac34 b|b|
.\]
For Kendall's tau,
\[
\tau(C)=4\int_{[0,1]^2}C(u,v)\de C(u,v)-1
.\]
Using \eqref{eq:Lb-density} and the elementary integrals \(\int_0^1 u\,\sigma(u)\de u=-\tfrac14\), \(\int_0^1\ell(u)\sigma(u)\de u=0\), and \(\int_0^1 v\,g_b'(v)\de v=-A_b\), a direct calculation gives
\(
\int_{[0,1]^2}L_b\,c_b\de u\de v=\tfrac14+\tfrac12 A_b
\),
hence \(\tau(L_b)=2A_b=\tfrac12 b|b|\).
\end{proof}

\begin{remark}
The representation \(L_b=\PiC+\ell\otimes g_b\) makes the extremal structure transparent: the factor \(\ell(u)=\min\{u,1-u\}\) is forced by conditioning on the two halves of the first coordinate, while the tent $g_b$ is the least costly way, in $L^2$, to enforce the median displacement \(g_b(\tfrac12)=b/2\) under a unit-Lipschitz constraint.
This is the mechanism behind the cubic exponent in \(\xi=|\beta|^3/2\).
\end{remark}

\section{The sharp inequality}\label{sec:sharp-inequality}

We now show that the family $(L_b)_{b\in[-1,1]}$ is extremal.

\begin{proposition}[Sharp inequality and equality case]\label{prop:sharp-inequality}
Let $C\in\CC$ and put \(b\coloneqq\beta(C)\).
Then \(\xi(C)\ge|b|^3/2\), with equality if and only if \(C=L_b\).
\end{proposition}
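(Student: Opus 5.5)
Following the variational outline from the introduction, we first reduce $\xi(C)$ to a one-dimensional quantity by averaging the kernel over each half of the first coordinate. Let $h_v(u)=\partial_1C(u,v)$ and define, for $v\in[0,1]$,
\[
F_0(v)\coloneqq 2\int_0^{1/2}h_v(u)\de u=2C(\tfrac12,v),\qquad
F_1(v)\coloneqq 2\int_{1/2}^1 h_v(u)\de u=2\bigl(v-C(\tfrac12,v)\bigr).
\]
By \eqref{eq:C-from-h}, both are nondecreasing distribution functions on $[0,1]$ with $F_0(0)=F_1(0)=0$ and $F_0(1)=F_1(1)=1$, and $\tfrac12(F_0+F_1)=\mathrm{id}$. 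Write $F_0(v)=v+g(v)$ and $F_1(v)=v-g(v)$ with $g(v)\coloneqq 2C(\tfrac12,v)-v$. Then $g(0)=g(1)=0$ and $g(\tfrac12)=2C(\tfrac12,\tfrac12)-\tfrac12=b/2$. Monotonicity of $v\pm g(v)$ for both signs says exactly that $g$ is $1$-Lipschitz.

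Next we apply Jensen (Cauchy--Schwarz) on each half strip, for each $v$:
\[
\int_0^{1/2}h_v^2\de u\ge\tfrac12F_0(v)^2,\qquad \int_{1/2}^1 h_v^2\de u\ge \tfrac12 F_1(v)^2 .
\]
Summing these and integrating in $v$ gives
\[
\int\!\!\int(\partial_1C)^2\ge\tfrac12\int_0^1\bigl((v+g)^2+(v-g)^2\bigr)\de v=\tfrac13+\int_0^1 g^2\de v .
\]
Hence $\xi(C)\ge 6\int_0^1 g^2$, exactly as in the computation for $L_b$ in Proposition~\ref{prop:Lb-properties}(i).

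The remaining step is the one-dimensional problem: minimize $\int_0^1 g^2$ over $1$-Lipschitz $g$ with $g(\tfrac12)=b/2$ and zero boundary values. The Lipschitz condition gives the pointwise bound $|g(v)|\ge(|b|/2-|v-\tfrac12|)_+=|g_b(v)|$. Therefore $\int g^2\ge\int g_b^2=|b|^3/12$, and so $\xi(C)\ge|b|^3/2$. The boundary conditions hold automatically, since $g_b$ vanishes near $0$ and $1$ because $|b|\le1$.

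For the equality case, suppose $\xi(C)=|b|^3/2$. Then both inequalities must be equalities. First, $\int(g^2-g_b^2)=0$ with a nonnegative integrand forces $|g|=|g_b|$ a.e., hence everywhere by continuity. Moreover, $g$ cannot change sign on the support interval $(\alpha_b,\gamma_b)$: $|g|=|g_b|>0$ there, $g$ is continuous, and $g(\tfrac12)=b/2$. So $g=g_b$. Second, equality in Jensen forces $h_v$ to be a.e. constant on each half, equal to $F_0(v)$ on $(0,\tfrac12)$ and $F_1(v)$ on $(\tfrac12,1)$, for a.e. $v$. To pass to all $v$, we use monotonicity in $v$ and right-continuity, or equivalently integrate to obtain $C(u,v)$ directly. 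Then \eqref{eq:C-from-h} gives $C(u,v)=uF_0(v)$ for $u\le\tfrac12$ and $C(u,v)=\tfrac12F_0(v)+(u-\tfrac12)F_1(v)$ for $u>\tfrac12$. This matches \eqref{eq:Lb-two-strip}, so $C=L_b$; the converse direction is Proposition~\ref{prop:Lb-properties}(i).

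The main obstacle is making the equality case rigorous, namely the measure-theoretic passage from equality of the double integrals to the pointwise identity $C=L_b$. Since $C$ is continuous and determined by integrals of $h_v$, the a.e.-in-$v$ statements suffice, so this is routine.
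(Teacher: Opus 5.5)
Your proposal is correct and follows essentially the same route as the paper. You use the two-strip conditional distribution functions $F_0=v+g$, $F_1=v-g$, then Jensen on each half of the $u$-axis to get $\xi(C)\ge 6\int_0^1 g^2$, then the pointwise Lipschitz bound $|g|\ge|g_b|$, and the same two-step equality analysis: continuity and $g(\tfrac12)=b/2$ force $g=g_b$, and constancy of $h_v$ on each half, integrated in $u$, recovers $L_b$. The only blemish is cosmetic: for $|b|=1$ the tent $g_b$ vanishes only at the endpoints $0$ and $1$ rather than on a neighbourhood of them, but vanishing at the endpoints is all the boundary condition requires.
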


\begin{proof}
Choose \(h_v(u)=\partial_1C(u,v)\) as in Section~\ref{sec:preliminaries}, and condition on the two halves of the first coordinate by setting
\begin{align*}
F_0(v)\coloneqq&2\int_0^{1/2}\!h_v(u)\de u=2C\bigl(\tfrac12,v\bigr) \\
F_1(v)\coloneqq&2v-F_0(v)
=2\int_{1/2}^1h_v(u)\de u .
\end{align*}
Since \(v\mapsto h_v(u)\) is a distribution function for a.e.~\(u\), both \(F_0\) and \(F_1\) are nondecreasing. Moreover, they are continuous because
\(F_0(v)=2C(\tfrac12,v)\) and \(F_1(v)=2v-F_0(v)\). Finally,
\(F_0(0)=F_1(0)=0\) and \(F_0(1)=F_1(1)=1\).
Thus both are distribution functions on \([0,1]\), with
\(\tfrac12F_0(v)+\tfrac12F_1(v)=v\).
Writing
\[
g(v)\coloneqq F_0(v)-v
,\]
we have \(F_0=v+g\), \(F_1=v-g\), and \(g(0)=g(1)=0\).
Since $F_0$ and $F_1$ are nondecreasing, \(|g(t)-g(s)|\le t-s\) for $0\le s<t\le1$, so $g$ is \(1\)-Lipschitz, and \eqref{eq:beta-def} gives
\[
C(\tfrac12,\tfrac12)=(1+b)/4
,\]
hence \(g(\tfrac12)=b/2\).

For each fixed $v$, Jensen's inequality on the two halves of the $u$-axis yields
\[
\int_0^1\! h_v(u)^2\de u
\ge\tfrac12F_0(v)^2+\tfrac12F_1(v)^2
=v^2+g(v)^2 ,
\]
since \(F_0=v+g\) and \(F_1=v-g\).
Integrating in $v$ and using \eqref{eq:xi-def} gives \(\xi(C)\ge6\int_0^1 g(v)^2\de v\).
Because $g$ is \(1\)-Lipschitz with \(g(\tfrac12)=b/2\), we have
\(
|g(v)|\ge\bigl(\tfrac{|b|}2-|v-\tfrac12|\bigr)_+
\),
so
\[
\int_0^1 g(v)^2\de v
\ge
\int_0^1\Bigl(\tfrac{|b|}2-\bigl|v-\tfrac12\bigr|\Bigr)_+^{2}\de v
=\tfrac{|b|^3}{12} ,
\]
and therefore \(\xi(C)\ge6\cdot|b|^3/12=|b|^3/2\).

Suppose equality holds.
Then equality holds in the pointwise Lipschitz lower bound for a.e.\ \(v\), and hence
\[
|g(v)|=\Bigl(\tfrac{|b|}2-\bigl|v-\tfrac12\bigr|\Bigr)_+
\]
for a.e.\ \(v\).
Since \(g\) is continuous and \(g(\tfrac12)=b/2\), this identity fixes the sign on the support of the tent and yields \(g=g_b\) on all of \([0,1]\).

Equality in Jensen's inequality holds for a.e.\ \(v\). Thus, for a.e.\ \(v\), the function \(u\mapsto h_v(u)\) is a.e.\ constant on each of the intervals \([0,\tfrac12]\) and \((\tfrac12,1]\), with constants \(F_b^+(v)\) and \(F_b^-(v)\), respectively. Hence the associated Markov kernel agrees a.e.\ with the kernel in \eqref{eq:Lb-kernel}. Integrating in \(u\) gives \(C(u,v)=L_b(u,v)\) for all \(u\) and a.e.\ \(v\), and by continuity of copulas the equality holds on all of \([0,1]^2\).
Conversely, $L_b$ attains equality by Proposition~\ref{prop:Lb-properties}~(i).
\end{proof}

\section{The exact region}\label{sec:exact-region}

We first show that the maximal value \(\xi=1\) is compatible with every value of Blomqvist's beta, and then assemble the proof of Theorem~\ref{thm:main}.

\begin{proposition}[Right boundary]\label{prop:right-boundary}
For every $b\in[-1,1]$ there is a radially symmetric and exchangeable copula
$D_b\in\CC$ with \(\xi(D_b)=1\) and \(\beta(D_b)=b\).
For \(b\ge0\), \(D_b\) may be chosen positively quadrant dependent.
\end{proposition}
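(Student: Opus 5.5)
We take $D_b$ to be the copula supported on the graph of a measure-preserving involution $T_b$ of $[0,1]$. Put $a\coloneqq(1+b)/4\in[0,\tfrac12]$ and define
\[
T_b(u)\coloneqq
\begin{cases}
u, & u\in[0,a]\cup[1-a,1],\\
1-u, & u\in(a,1-a),
\end{cases}
\]
and $D_b(u,v)\coloneqq\lambda\{t\in[0,u]:T_b(t)\le v\}$. Equivalently, $D_b$ is the ordinal-sum-type shuffle that is $M$ on the two corner squares $[0,a]^2$ and $[1-a,1]^2$ and $W$ on the middle square $[a,1-a]^2$. It interpolates between $D_{-1}=W$ ($a=0$) and $D_1=M$ ($a=\tfrac12$).

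\emph{Copula and $\xi=1$.} $T_b$ is a piecewise isometry that permutes intervals, so it preserves Lebesgue measure, and $D_b$ is a copula. Its Markov kernel is $h_v(u)=\1\{T_b(u)\le v\}\in\{0,1\}$. Hence $h_v^2=h_v$, and by \eqref{eq:C-from-h} $\int_0^1\!\int_0^1 h_v(u)^2\de u\de v=\int_0^1 v\de v=\tfrac12$. Then \eqref{eq:xi-def} gives $\xi(D_b)=6\cdot\tfrac12-2=1$.

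\emph{The value of $\beta$.} We compute $D_b(\tfrac12,\tfrac12)=\lambda\{t\le\tfrac12: T_b(t)\le\tfrac12\}$. On $[0,a]$ we have $T_b(t)=t\le\tfrac12$, which contributes $a$. On $(a,\tfrac12]$ we have $T_b(t)=1-t\ge\tfrac12$, which contributes nothing. So $D_b(\tfrac12,\tfrac12)=a$ and $\beta(D_b)=4a-1=b$.

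\emph{Symmetries.} $T_b$ is an involution, so its graph is invariant under $(u,v)\mapsto(v,u)$, which gives exchangeability. Also $T_b(1-u)=1-T_b(u)$, because the pieces $[0,a]$ and $[1-a,1]$ are swapped by $u\mapsto1-u$ and the middle piece is mapped to itself. Hence the support measure is invariant under $(u,v)\mapsto(1-u,1-v)$, which is radial symmetry.

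\emph{PQD for $b\ge0$, i.e.\ $a\ge\tfrac14$.} This is the step requiring real work. The plan is to write $D_b$ explicitly:
\[
D_b(u,v)=\min\{u,v,a\}+\bigl(\min\{u,v,1-a\}-\max\{a,1-v,0\}\bigr)_+^{(\text{middle})}+\bigl(\min\{u,v\}-(1-a)\bigr)_+ .
\]
Here the middle term is the length of $\{t\in(a,\min\{u,1-a\}): 1-t\le v\}$, namely $\bigl(\min\{u,1-a\}-\max\{a,1-v\}\bigr)_+$. We then check $D_b(u,v)\ge uv$ region by region. By radial symmetry and exchangeability, it suffices to treat $u\le v$ and $u+v\le1$.
\begin{itemize}
\item If $u\le a$, then $D_b=u\ge uv$.
\item If $a<u\le\tfrac12$ and $u+v\le1$, the middle term vanishes unless $v\ge1-u$, so $D_b=a$. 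We need $a\ge uv$, which holds since $uv\le u(1-u)\le\tfrac14\le a$. This is exactly where $a\ge\tfrac14$, i.e.\ $b\ge0$, enters.
\item The remaining configurations reduce to these by the two symmetries, or follow from the boundary values together with piecewise linearity of $D_b-uv$ in each variable.
\end{itemize}

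I expect this case check to be the main obstacle, specifically making sure the reduction by symmetry covers all cells of the partition induced by $a$, $\tfrac12$ and $1-a$. If a cell resists, the fallback is to use that $D_b-uv$ is concave along lines in each cell, so it suffices to check the cell vertices, where the values are explicit.
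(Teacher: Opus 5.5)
Your proof is correct and is essentially the paper's argument. Both take $D_b$ to be the copula of $(U,T(U))$ for a measure-preserving involution $T$ that fixes $[0,a]\cup[1-a,1]$ with $a=(1+b)/4$ and commutes with $u\mapsto1-u$; the only difference is that you reflect the middle block (so $D_b$ is the ordinal sum of $M,W,M$, equal to $a+(u+v-1)_+$ on $[a,1-a]^2$ and to $\min\{u,v\}$ elsewhere, with $D_{-1}=W$ and $D_1=M$), whereas the paper swaps $(s,\tfrac12]$ and $(\tfrac12,1-s]$ by translations.

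The case check you flag as the main obstacle is already complete: $u\le\tfrac12$ on the fundamental triangle $\{u\le v,\ u+v\le1\}$, so your first two bullets exhaust it and the third bullet is superfluous. That is fortunate, because the vertex-check fallback is unsound: $-uv$ is convex along lines of negative slope, so $D_b-uv$ is not concave on cells. Also, the middle term in your display should be $\bigl(\min\{u,1-a\}-\max\{a,1-v\}\bigr)_+$, as your follow-up sentence correctly says, not $\min\{u,v,1-a\}$.
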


\begin{proof}
Fix $b\in[-1,1]$ and set \(s\coloneqq(1+b)/4\in[0,\tfrac12]\).
The interval-exchange map
\[
f_s(u)=
\begin{cases}
 u, & 0\le u\le s,\\
 u+\tfrac12-s, & s<u\le\tfrac12,\\
 u-\tfrac12+s, & \tfrac12<u\le1-s,\\
 u, & 1-s<u\le1,
\end{cases}
\]
fixes the two outer intervals and swaps the two equal-length middle intervals, so it is measure preserving.
Let $D_b$ be the copula of \((U,f_s(U))\) with \(U\sim\mathrm U(0,1)\).
Then \(\xi(D_b)=1\) since $D_b$ is deterministic and measure preserving (Section~\ref{sec:preliminaries}).
Among the points $u\le\tfrac12$, precisely \([0,s]\) is mapped into \([0,\tfrac12]\), so
\(
D_b(\tfrac12,\tfrac12)=\mathbb P(U\le\tfrac12,\,f_s(U)\le\tfrac12)=s
\),
and \(\beta(D_b)=4s-1=b\).
The identities
\[
f_s(1-u)=1-f_s(u),
\qquad
f_s(f_s(u))=u
\]
hold outside finitely many points. Hence the law of \((U,f_s(U))\) is invariant
under \((u,v)\mapsto(1-u,1-v)\) and under \((u,v)\mapsto(v,u)\), so \(D_b\)
is radially symmetric and exchangeable.

It remains to note that, for \(b\ge0\), we have \(s\ge1/4\), and then \(D_b\)
is PQD. By radial symmetry it is enough to check \(0\le v\le1/2\). If
\(0\le v\le s\), then
\[
D_b(u,v)=\min\{u,v\}\ge uv.
\]
If \(s\le v\le1/2\), then
\[
D_b(u,v)
=
\min\{u,s\}
+
\bigl(\min\{u,v+\tfrac12-s\}-\tfrac12\bigr)_+ .
\]
For \(u\le s\) this equals \(u\), for \(s<u\le1/2\) it equals \(s\ge1/4\ge uv\),
for \(1/2<u\le v+\tfrac12-s\) it equals \(s+u-\tfrac12\), and
\[
s+u-\tfrac12-uv
=
s-\tfrac12+u(1-v)
\ge
s-\tfrac v2
\ge0,
\]
while for \(u>v+\tfrac12-s\) it equals \(v\ge uv\). Thus \(D_b(u,v)\ge uv\)
for all \(u,v\), so \(D_b\) is PQD when \(b\ge0\).

\end{proof}

\begin{proof}[Proof of Theorem~\ref{thm:main}]
By the bounds $0\le\xi\le1$ from Section~\ref{sec:preliminaries}, the range $-1\le\beta\le1$ from \eqref{eq:beta-def}, and Proposition~\ref{prop:sharp-inequality}, every $C\in\CC$ satisfies \(|\beta(C)|^3\le2\xi(C)\), so
\[
\RR_{\xi,\beta}
\subseteq
\bigl\{(x,y)\in[0,1]\times[-1,1]: |y|^3\le2x\bigr\} .
\]
Conversely, fix $b\in[-1,1]$.
By Propositions~\ref{prop:Lb-properties} and \ref{prop:right-boundary} there are copulas \(L_b,D_b\) with \(\beta(L_b)=\beta(D_b)=b\), \(\xi(L_b)=|b|^3/2\), and \(\xi(D_b)=1\).
For $\lambda\in[0,1]$ set
\(
C_{b,\lambda}\coloneqq(1-\lambda)L_b+\lambda D_b
\),
which is a copula since $\CC$ is convex, with \(\beta(C_{b,\lambda})=b\) as $\beta$ is affine in $C$.
Choose Markov-kernel versions \(h^L_v(u)=\partial_1L_b(u,v)\) and \(h^D_v(u)=\partial_1D_b(u,v)\). Then
\[
h^\lambda_v(u)\coloneqq (1-\lambda)h^L_v(u)+\lambda h^D_v(u)
\]
is a Markov-kernel version of \(\partial_1C_{b,\lambda}(u,v)\). Therefore
\[
\xi(C_{b,\lambda})
=
6\int_0^1\!\!\int_0^1
\bigl((1-\lambda)h^L_v(u)+\lambda h^D_v(u)\bigr)^2
\de u\de v-2,
\]
so \(\lambda\mapsto\xi(C_{b,\lambda})\) is continuous. Its values at \(\lambda=0\) and \(\lambda=1\) are \(|b|^3/2\) and \(1\), respectively. Since every copula with \(\beta=b\) satisfies the lower bound from Proposition~\ref{prop:sharp-inequality} and every copula satisfies \(\xi\le1\), the intermediate value theorem realizes every \(x\in[|b|^3/2,1]\) at the fixed value \(\beta=b\).
Since $b\in[-1,1]$ was arbitrary, the reverse inclusion holds, and combining both proves the theorem.
\end{proof}

\section{Exact regions for subclasses of copulas}\label{sec:subclasses}

The preceding arguments also give information about natural subclasses of copulas. The sharp inequality in Proposition~\ref{prop:sharp-inequality} is universal, while the reverse inclusions depend only on whether the boundary constructions can be chosen inside the subclass under consideration and whether the subclass is stable under convex mixtures. We record a few consequences of this observation. For radially symmetric copulas the full region remains unchanged. For positively quadrant dependent copulas, the sign restriction \(C(\tfrac12,\tfrac12)\ge\tfrac14\) forces \(\beta\ge0\), but no further restriction occurs. Finally, for stochastic monotonicity subclasses the left boundary is still attained, whereas the deterministic right boundary becomes much more rigid.

\begin{corollary}[Radially symmetric copulas]\label{cor:rs-region}
\[
\RR^{\RS}_{\xi,\beta}
=
\RR_{\xi,\beta}
=
\bigl\{(x,y)\in[0,1]\times[-1,1]: |y|^3\le2x\bigr\}.
\]
\end{corollary}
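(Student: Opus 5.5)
The inclusion \(\RR^{\RS}_{\xi,\beta}\subseteq\RR_{\xi,\beta}\) is immediate, since \(\CC_{\RS}\subseteq\CC\). The region on the right was already identified with \(\RR_{\xi,\beta}\) in Theorem~\ref{thm:main}. So it remains to show that every point \((x,b)\) with \(b\in[-1,1]\) and \(|b|^3/2\le x\le1\) is attained by a radially symmetric copula. I would reuse the mixture construction from the proof of Theorem~\ref{thm:main} and check that it never leaves \(\CC_{\RS}\).

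\textbf{Step 1: both endpoints are radially symmetric.} The left-boundary copula \(L_b\) is radially symmetric by Proposition~\ref{prop:Lb-properties}~(iv). The right-boundary copula \(D_b\) is radially symmetric by Proposition~\ref{prop:right-boundary}.

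\textbf{Step 2: \(\CC_{\RS}\) is convex.} Radial symmetry is the condition
\[
C(u,v)=u+v-1+C(1-u,1-v)\quad\text{for all }u,v.
\]
This is an affine constraint on \(C\). If \(C_1\) and \(C_2\) satisfy it, then so does \((1-\lambda)C_1+\lambda C_2\), because the weights sum to one and the term \(u+v-1\) passes through the combination unchanged. Hence
\[
C_{b,\lambda}=(1-\lambda)L_b+\lambda D_b\in\CC_{\RS}\quad\text{for all }\lambda\in[0,1].
\]

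\textbf{Step 3: fill the segment.} As in the proof of Theorem~\ref{thm:main}, \(\beta(C_{b,\lambda})=b\) for every \(\lambda\). The map \(\lambda\mapsto\xi(C_{b,\lambda})\) is continuous, with value \(|b|^3/2\) at \(\lambda=0\) and value \(1\) at \(\lambda=1\). By the intermediate value theorem every \(x\in[|b|^3/2,1]\) is attained at \(\beta=b\) within \(\CC_{\RS}\). Letting \(b\) range over \([-1,1]\) gives the reverse inclusion.

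No step is a genuine obstacle. The only point needing care is the convexity check in Step 2, which is a one-line computation.
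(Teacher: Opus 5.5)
Your proposal is correct and follows the paper's own proof. As there, \(L_b\) and \(D_b\) are radially symmetric by Propositions~\ref{prop:Lb-properties}(iv) and \ref{prop:right-boundary}, the radially symmetric class is convex, and the interpolation argument from Theorem~\ref{thm:main} fills each segment \([|b|^3/2,1]\times\{b\}\); your explicit check that radial symmetry is an affine constraint spells out a step the paper only asserts.
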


\begin{proof}
The inclusion \(\RR^{\RS}_{\xi,\beta}\subseteq\RR_{\xi,\beta}\) is immediate.
For the reverse inclusion, fix \(b\in[-1,1]\). By
Proposition~\ref{prop:Lb-properties}, \(L_b\) is radially symmetric and attains
\((|b|^3/2,b)\). By Proposition~\ref{prop:right-boundary}, \(D_b\) is radially
symmetric and attains \((1,b)\). Since the class of radially symmetric copulas
is convex, the same interpolation argument as in the proof of
Theorem~\ref{thm:main} realizes every \(x\in[|b|^3/2,1]\) at \(\beta=b\).
\end{proof}

\begin{corollary}[PQD regions]\label{cor:pqd-region}
\[
\RR^{\PQD}_{\xi,\beta}
=
\RR^{\PQD,\RS}_{\xi,\beta}
=
\{(x,y)\in[0,1]^2:y^3\le2x\}.
\]
\end{corollary}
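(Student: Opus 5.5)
The plan is to prove the chain
\[
\RR^{\PQD,\RS}_{\xi,\beta}\subseteq\RR^{\PQD}_{\xi,\beta}\subseteq\{(x,y)\in[0,1]^2:y^3\le2x\}\subseteq\RR^{\PQD,\RS}_{\xi,\beta}.
\]
The first inclusion holds because \(\CC_{\PQD}\cap\CC_{\RS}\subseteq\CC_{\PQD}\).

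For the second inclusion, take \(C\) PQD. Then \(C(\tfrac12,\tfrac12)\ge\tfrac14\), so \(\beta(C)=4C(\tfrac12,\tfrac12)-1\ge0\). By Theorem~\ref{thm:main}, \((\xi(C),\beta(C))\) satisfies \(|\beta|^3\le2\xi\) with \(\xi\in[0,1]\). Since \(\beta\ge0\), this reads \(y^3\le2x\) with \(y\in[0,1]\).

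For the reverse inclusion, fix \(b\in[0,1]\) and \(x\in[b^3/2,1]\). I will reuse the interpolation \(C_{b,\lambda}=(1-\lambda)L_b+\lambda D_b\) from the proof of Theorem~\ref{thm:main}. The endpoints lie in the subclass:
\begin{itemize}
\item By Proposition~\ref{prop:Lb-properties}~(iv),(v), \(L_b\) is radially symmetric, and it is PQD since \(b\ge0\).
\item By Proposition~\ref{prop:right-boundary}, \(D_b\) is radially symmetric and, for \(b\ge0\), PQD.
\end{itemize}

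Both properties are preserved under convex combinations. PQD is the pointwise inequality \(C(u,v)\ge uv\), which is linear in \(C\). Radial symmetry is the identity \(C(u,v)=u+v-1+C(1-u,1-v)\), which is affine in \(C\) with coefficients summing to one. Hence every \(C_{b,\lambda}\) lies in \(\CC_{\PQD}\cap\CC_{\RS}\).

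Each \(C_{b,\lambda}\) also satisfies \(\beta(C_{b,\lambda})=b\), because \(\beta\) is affine. The map \(\lambda\mapsto\xi(C_{b,\lambda})\) is continuous, as shown in the proof of Theorem~\ref{thm:main}, and runs from \(b^3/2\) at \(\lambda=0\) to \(1\) at \(\lambda=1\). The intermediate value theorem then gives some \(\lambda\) with \(\xi(C_{b,\lambda})=x\). This proves the last inclusion and closes the chain.

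There is no real obstacle here. The only point needing care is that PQD is preserved under mixtures, and that follows directly from its linearity.
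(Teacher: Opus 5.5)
Your proposal is correct and follows the paper's proof essentially step for step. PQD forces $\beta\ge0$, which together with the universal lower bound gives the outer inclusion; convex mixtures of $L_b$ and $D_b$ (both PQD and radially symmetric for $b\ge0$) keep $\beta=b$, and continuity of $\lambda\mapsto\xi(C_{b,\lambda})$ then gives the reverse inclusion. Your explicit chain of inclusions and your check that PQD and radial symmetry are preserved under mixtures make precise what the paper states in one line.
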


\begin{proof}
If \(C\) is PQD, then \(C(\tfrac12,\tfrac12)\ge\tfrac14\), so
\(\beta(C)\ge0\). Together with Proposition~\ref{prop:sharp-inequality}, this
gives
\[
\RR^{\PQD}_{\xi,\beta}
\subseteq
\{(x,y)\in[0,1]^2:y^3\le2x\}.
\]

Conversely, fix \(b\in[0,1]\). By Proposition~\ref{prop:Lb-properties},
\(L_b\) is PQD and radially symmetric, with
\[
\beta(L_b)=b,\qquad \xi(L_b)=b^3/2.
\]
By Proposition~\ref{prop:right-boundary}, \(D_b\) is PQD and radially
symmetric, with
\[
\beta(D_b)=b,\qquad \xi(D_b)=1.
\]
The class \(\CC_{\PQD},\CC_{\RS}\) is convex, and \(\beta\) is affine in
the copula. Hence, for
\[
C_{b,\lambda}=(1-\lambda)L_b+\lambda D_b,\qquad 0\le\lambda\le1,
\]
we have \(C_{b,\lambda}\in\CC_{\PQD},\CC_{\RS}\) and
\(\beta(C_{b,\lambda})=b\). As in the proof of Theorem~\ref{thm:main},
\(\lambda\mapsto\xi(C_{b,\lambda})\) is continuous, so every
\(x\in[b^3/2,1]\) is attained at \(\beta=b\).
\end{proof}

\begin{remark}
Reflecting in either coordinate gives the analogous region for negatively quadrant dependent copulas,
\[
\begin{aligned}
\RR^{\NQD}_{\xi,\beta}
=\{(x,y)\in[0,1]\times[-1,0]: |y|^3\le2x\} .
\end{aligned}
\]
\end{remark}

\begin{remark}[Stochastic monotonicity subclasses]
Let \(\CC_{\SI}\) denote the class of copulas for which the second coordinate is
stochastically increasing in the first. Since \(\SI\) implies \(\PQD\),
\[
\RR^{\SI}_{\xi,\beta}
\subseteq
\RR^{\PQD}_{\xi,\beta}
=
\{(x,y)\in[0,1]^2:y^3\le2x\}.
\]
The lower boundary is attained in \(\CC_{\SI}\): for every \(b\in[0,1]\),
\(L_b\in\CC_{\SI}\) and \((b^3/2,b)\in\RR^{\SI}_{\xi,\beta}\).

However, the right boundary from Theorem~\ref{thm:main} does not extend to
\(\CC_{\SI}\). If \(C\in\CC_{\SI}\) and \(\xi(C)=1\), then the conditional law of
\(V\) given \(U=u\) is a.e.~degenerate, say \(V=f(U)\), and \(f\) is
measure-preserving. The \(\SI\) property forces \(f\) to admit a nondecreasing representative.
A nondecreasing measure-preserving map on \([0,1]\) is the identity a.e.;
hence \(f(u)=u\) a.e.\ and \(\beta(C)=1\). Thus \(\xi=1\) is attained in
\(\CC_{\SI}\) only at the comonotonic copula \(M\).
For each \(b\in[0,1]\), the copula
\[
H_b^+=(1-b)\Pi+bM
\]
belongs to \(\CC_{\SI}\), satisfies \(\beta(H_b^+)=b\), and has \(\xi(H_b^+)=b^2\).
Indeed, a kernel of \(H_b^+\) is
\[
h_v(u)=(1-b)v+b\mathbf 1_{\{u\le v\}},
\]
so
\[
\int_0^1\int_0^1 h_v(u)^2\de u\de v
=\frac{(1-b)^2}{3}+\frac{2b(1-b)}{3}+\frac{b^2}{2}
=
\frac13+\frac{b^2}{6},
\]
and therefore \(\xi(H_b^+)=b^2\).

Since \(\CC_{\SI}\) is convex, the copulas
\[
(1-\lambda)L_b+\lambda H_b^+,\qquad 0\le\lambda\le1,
\]
belong to \(\CC_{\SI}\), preserve \(\beta=b\), and show by continuity that
\[
\{(x,b): b^3/2\le x\le b^2\}
\subseteq
\RR^{\SI}_{\xi,\beta}.
\]
Thus this whole vertical interval is attained at \(\beta=b\) within
\(\CC_{\SI}\), although the exact upper envelope for \(\RR^{\SI}_{\xi,\beta}\)
is not obtained by the present argument.
Reflecting the statement gives the corresponding facts for \(\CC_{\SD}\):
\[
\RR^{\SD}_{\xi,\beta}
\subseteq
\RR^{\NQD}_{\xi,\beta}
=
\{(x,y)\in[0,1]\times[-1,0]:|y|^3\le2x\},
\]
the lower boundary is attained by \(L_b\) for \(b\le0\), and \(\xi=1\) is
attained in \(\CC_{\SD}\) only at \(\beta=-1\).
\end{remark}

\section{Concluding remarks}\label{sec:conclusion}

The exact region between Chatterjee's rank correlation and Blomqvist's beta is the cubic epigraph $\xi\ge|\beta|^3/2$.
Its left boundary is not a finite-dimensional mass-allocation phenomenon but a one-dimensional constrained $L^2$ minimization for conditional distribution functions: a prescribed value of $\beta$ fixes the displacement, at the median, of the conditional distribution on the left half of the first coordinate, monotonicity transports this displacement away from the median with slope at most one, and the unique least-energy profile is the signed tent $g_b$.
This explains both the cubic exponent and the explicit form of the boundary copulas $L_b$.
Given the exact \(\xi\)--\(\rho\) region in \cite{ansari2026exact}, it would be natural to ask for the three-dimensional region of the triple \((\xi,\beta,\rho)\), and perhaps for analogous regions involving other measures of concordance, and to understand the extremal structure of these higher-dimensional regions.
These questions are left for future work.

\bibliographystyle{plainnat}
\bibliography{cas-refs}

@article{ansari2026exact,
 author = {Ansari, Jonathan and Rockel, Marcus},
 title = {The exact region and an inequality between {Chatterjee}'s and {Spearman}'s rank correlations},
 fjournal = {Journal of Multivariate Analysis},
 journal = {J. Multivariate Anal.},
 year = {2026},
 pages = {105630},
 xxxdoi = {10.1016/j.jmva.2026.105630},
 xxxeprint = {2506.15897},
 xxxarchiveprefix = {arXiv}
}

@article{bukovvsek2023exact,
 author = {Kokol Bukov{\v{s}}ek, Damjana and Stopar, Nik},
 title = {On the exact regions determined by {Kendall}'s tau and other concordance measures},
 fjournal = {Mediterranean Journal of Mathematics},
 journal = {Mediterr. J. Math.},
 xxxissn = {1660-5446},
 volume = {20},
 number = {3},
 pages = {16},
xxxnote = {Id/No 147},
 year = {2023},
 xxxlanguage = {English},
 xxxdoi = {10.1007/s00009-023-02350-0},
 xxxkeywords = {62H20,62H05,60E05},
 xxxzbMATH = {7660429},
 xxxZbl = {1506.62318}
}

@article{bukovvsek2025exact,
 author = {Kokol Bukov{\v{s}}ek, Damjana and Moj{\v{s}}kerc, Bla{\v{z}}},
 title = {The exact region determined by {Blomqvist}'s beta, {Spearman}'s footrule and {Gini}'s gamma},
 fjournal = {Journal of Computational and Applied Mathematics},
 journal = {J. Comput. Appl. Math.},
 xxxissn = {0377-0427},
 volume = {473},
 pages = {13},
 note = {Id/No 116861},
 year = {2026},
 xxxlanguage = {English},
 xxxdoi = {10.1016/j.cam.2025.116861},
 xxxkeywords = {62H20,60E05,62H05},
 xxxzbMATH = {8096510},
 xxxZbl = {1573.62141}
}

@article{chatterjee2020,
 author = {Chatterjee, Sourav},
 title = {A new coefficient of correlation},
 fjournal = {Journal of the American Statistical Association},
 journal = {J. Am. Statist. Assoc.},
 xxxissn = {0162-1459},
 volume = {116},
 number = {536},
 pages = {2009--2022},
 year = {2021},
 xxxlanguage = {English},
 xxxdoi = {10.1080/01621459.2020.1758115},
 xxxkeywords = {62H20},
 xxxurl = {figshare.com/articles/journal_contribution/A_new_coefficient_of_correlation/12202835},
 xxxzbMATH = {7662818},
 xxxZbl = {1506.62317}
}

@article{darsow1992copulas,
 author = {Darsow, William F. and Nguyen, Bao and Olsen, Elwood T.},
 title = {Copulas and {Markov} processes},
 fjournal = {Illinois Journal of Mathematics},
 journal = {Illinois J. Math.},
 xxxissn = {0019-2082},
 volume = {36},
 number = {4},
 pages = {600--642},
 year = {1992},
 xxxlanguage = {English},
 xxxkeywords = {60E99,60J25,60J65,60J05,60G07},
 xxxzbMATH = {119922},
 xxxZbl = {0770.60019}
}

@article{dette2013copula,
 author = {Dette, Holger and Siburg, Karl F. and Stoimenov, Pavel A.},
 title = {A copula-based non-parametric measure of regression dependence},
 fjournal = {Scandinavian Journal of Statistics},
 journal = {Scand. J. Stat.},
 xxxissn = {0303-6898},
 volume = {40},
 number = {1},
 pages = {21--41},
 year = {2013},
 xxxlanguage = {English},
 xxxdoi = {10.1111/j.1467-9469.2011.00767.x},
 xxxkeywords = {62H20,62H05,62G08,62G32,62G05,62G20,65C60},
 xxxurl = {hdl.handle.net/2003/26707},
 xxxzbMATH = {6147323},
 xxxZbl = {1259.62050}
}

@article{bukovvsek2021exact,
 author = {Kokol Bukov{\v{s}}ek, Damjana and Ko{\v{s}}ir, Toma{\v{z}} and Moj{\v{s}}kerc, Bla{\v{z}} and Omladi{\v{c}}, Matja{\v{z}}},
 title = {Spearman's footrule and {Gini}'s gamma: local bounds for bivariate copulas and the exact region with respect to {Blomqvist}'s beta},
 fjournal = {Journal of Computational and Applied Mathematics},
 journal = {J. Comput. Appl. Math.},
 xxxissn = {0377-0427},
 volume = {390},
 pages = {24},
 note = {Id/No 113385},
 year = {2021},
 xxxlanguage = {English},
 xxxdoi = {10.1016/j.cam.2021.113385},
 xxxkeywords = {60E05,60E15,62H20},
 xxxzbMATH = {7309650},
 xxxZbl = {1457.60025}
}

@article{bukovvsek2022exact,
 author = {Kokol Bukov{\v{s}}ek, Damjana and Moj{\v{s}}kerc, Bla{\v{z}}},
 title = {On the exact region determined by {Spearman}'s footrule and {Gini}'s gamma},
 fjournal = {Journal of Computational and Applied Mathematics},
 journal = {J. Comput. Appl. Math.},
 xxxissn = {0377-0427},
 volume = {410},
 pages = {13},
 note = {Id/No 114212},
 year = {2022},
 xxxlanguage = {English},
 xxxdoi = {10.1016/j.cam.2022.114212},
 xxxkeywords = {62H20},
 xxxzbMATH = {7503439},
 xxxZbl = {1503.62053}
}

@Book{durante2016principles,
 Author = {Fabrizio {Durante} and Carlo {Sempi}},
 Title = {{Principles of Copula Theory}},
 xxxISBN = {978-1-4398-8442-3; 978-1-4398-8444-7},
 Pages = {xvi + 315},
 Year = {2016},
 Publisher = {Boca Raton, FL: CRC Press},
 xxxLanguage = {English},
 xxxDOI = {10.1201/b18674},
 xxxMSC2010 = {62-02 62H20 62H05 62H10 60E05},
 xxxZbl = {1380.62008}
}

@article{fuchs2023total,
 author = {Fuchs, S. and Tschimpke, M.},
 title = {Total positivity of copulas from a {Markov} kernel perspective},
 fjournal = {Journal of Mathematical Analysis and Applications},
 journal = {J. Math. Anal. Appl.},
 xxxissn = {0022-247X},
 volume = {518},
 number = {1},
 pages = {21},
 note = {Id/No 126629},
 year = {2023},
 xxxlanguage = {English},
 xxxdoi = {10.1016/j.jmaa.2022.126629},
 xxxkeywords = {62Hxx,62Gxx,60Exx},
 xxxzbMATH = {7601331}
}

@Book{nelsen2006introduction,
    Author = {Roger B. {Nelsen}},
    Title = {{An Introduction to Copulas. 2nd ed.}},
    FJournal = {{Springer Series in Statistics}},
    Journal = {{Springer Ser. Stat.}},
    xxxISSN = {0172-7397},
    xxxEdition = {2nd ed.},
    xxxISBN = {0-387-28659-4/hbk},
    Pages = {xiii + 269},
    Year = {2006},
    Publisher = {New York, NY: Springer},
    xxxLanguage = {English},
    xxxMSC2010 = {62H05 62H20 62-01 60E05 60-01},
    xxxZbl = {1152.62030}
}

@article{scarsini1984measures,
 author = {Scarsini, Marco},
 title = {On measures of concordance},
 fjournal = {Stochastica},
 journal = {Stochastica},
 XXXissn = {0210-7821},
 volume = {8},
 pages = {201--218},
 year = {1984},
 XXXlanguage = {English},
 XXXkeywords = {62H20},
 XXXurl = {https://eudml.org/doc/38916},
 XXXzbMATH = {3932217},
 XXXZbl = {0582.62047}
}

@article{schreyer2017exact,
 author = {Schreyer, Manuela and Paulin, Roland and Trutschnig, Wolfgang},
 title = {On the exact region determined by {Kendall}'s {{\(\tau\)}} and Spearman's {{\(\rho\)}}},
 fjournal = {Journal of the Royal Statistical Society. Series B. Statistical Methodology},
 journal = {J. R. Stat. Soc., Ser. B, Stat. Methodol.},
 xxxissn = {1369-7412},
 volume = {79},
 number = {2},
 pages = {613--633},
 year = {2017},
 xxxlanguage = {English},
 xxxdoi = {10.1111/rssb.12181},
 xxxkeywords = {62H20},
 xxxzbMATH = {7063987},
 xxxZbl = {1414.62202}
}

@article{sklar1959fonctions,
 author = {Sklar, M.},
 title = {Fonctions de r{\'e}partition {\`a} {{\(n\)}} dimensions et leur marges},
 fjournal = {Publications de l'Institut de Statistique de l'Universit{\'e} de Paris},
 journal = {Publ. Inst. Stat. Univ. Paris},
 xxxISSN = {0553-2930},
 volume = {8},
 pages = {229--231},
 year = {1960},
 xxxlanguage = {French},
 xxxzbMATH = {3163305},
 xxxZbl = {0100.14202}
}

@article{blomqvist1950measure,
 author = {Blomqvist, Nils},
 title = {On a measure of dependence between two random variables},
 fjournal = {Annals of Mathematical Statistics},
 journal = {Ann. Math. Stat.},
 volume = {21},
 number = {4},
 pages = {593--600},
 year = {1950},
 xxxdoi = {10.1214/aoms/1177729754},
 xxxzbl = {0040.22202}
}

@article{renyi1959measures,
 author = {R{\'e}nyi, Alfr{\'e}d},
 title = {On measures of dependence},
 fjournal = {Acta Mathematica Academiae Scientiarum Hungaricae},
 journal = {Acta Math. Acad. Sci. Hung.},
 volume = {10},
 pages = {441--451},
 year = {1959},
 xxxdoi = {10.1007/BF02024507}
}

@article{schweizerwolff1981,
 author = {Schweizer, B. and Wolff, E. F.},
 title = {On nonparametric measures of dependence for random variables},
 fjournal = {The Annals of Statistics},
 journal = {Ann. Stat.},
 volume = {9},
 number = {4},
 pages = {879--885},
 year = {1981},
 xxxdoi = {10.1214/aos/1176345528}
}

@book{joe1997multivariate,
 author = {Joe, Harry},
 title = {Multivariate Models and Dependence Concepts},
 series = {Monographs on Statistics and Applied Probability},
 volume = {73},
 publisher = {Chapman \& Hall, London},
 year = {1997},
 xxxisbn = {0-412-07331-5}
}

@article{lehmann1966concepts,
 author = {Lehmann, E. L.},
 title = {Some concepts of dependence},
 fjournal = {Annals of Mathematical Statistics},
 journal = {Ann. Math. Stat.},
 volume = {37},
 number = {5},
 pages = {1137--1153},
 year = {1966},
 xxxdoi = {10.1214/aoms/1177699260}
}

@book{karlin1968total,
 author = {Karlin, Samuel},
 title = {Total Positivity. {Vol}. {I}},
 publisher = {Stanford University Press, Stanford, Calif.},
 year = {1968}
}

@article{karlinrinott1980classes,
 author = {Karlin, Samuel and Rinott, Yosef},
 title = {Classes of orderings of measures and related correlation inequalities. {I}: Multivariate totally positive distributions},
 fjournal = {Journal of Multivariate Analysis},
 journal = {J. Multivariate Anal.},
 volume = {10},
 number = {4},
 pages = {467--498},
 year = {1980},
 xxxdoi = {10.1016/0047-259X(80)90065-2}
}

@article{karlinrinott1980reverse,
 author = {Karlin, Samuel and Rinott, Yosef},
 title = {Classes of orderings of measures and related correlation inequalities. {II}: Multivariate reverse rule distributions},
 fjournal = {Journal of Multivariate Analysis},
 journal = {J. Multivariate Anal.},
 volume = {10},
 number = {4},
 pages = {499--516},
 year = {1980},
 xxxdoi = {10.1016/0047-259X(80)90066-4}
}

@book{shakedshanthikumar2007stochastic,
 author = {Shaked, Moshe and Shanthikumar, J. George},
 title = {Stochastic Orders},
 series = {Springer Series in Statistics},
 publisher = {Springer, New York},
 year = {2007},
 xxxdoi = {10.1007/978-0-387-34675-5}
}

\end{document}